\documentclass[final]{siamltex}
\usepackage{newcent}
\usepackage{amsmath}
\usepackage{amssymb}
\usepackage{psfrag,graphicx}
\usepackage{epsfig}
\usepackage{verbatim}
\bibliographystyle{plain}
\newtheorem{thm}{Theorem}

\newtheorem{prp}{Proposition}
\newtheorem{ex}{Example}

\usepackage{amsmath,amssymb} 
\usepackage{amssymb}  
\usepackage{tikz}
\usepackage{multicol}
\usepackage{blindtext}
\def\E{\mathbf{E}}


\def\sym{\mathbb{S}}

\def\R{\mathbb{R}}
\def\E{\mathbf{E}}
\def\K{\mathbb{K}}

\title{Multi-Objective Linear Quadratic Team Optimization
\thanks{This work was supported by the Swedish Research Council.}}


\author{Ather Gattami
\thanks{A. Gattami is with the Electrical Engineering School,
        KTH-Royal Institute of Technology, SE-100 44 Stockholm, Sweden.
        {\tt\small gattami@kth.se}
        }}

\begin{document}

\maketitle

\begin{abstract}
In this paper, we consider linear quadratic team problems with an arbitrary number of quadratic constraints in both stochastic and deterministic settings. The team consists of players with different measurements about the state of nature. The objective of the team
is to minimize a quadratic cost subject to additional finite number of quadratic constraints. We will first consider the Gaussian case, where the state of nature is assumed to have a Gaussian distribution, and show that the linear decisions are optimal and can be found by
solving a semidefinite program We then consider the problem of minimizing a quadratic objective for the worst case scenario, subject to an arbitrary number of deterministic quadratic constraints. We show that linear decisions can be found by solving a semidefinite program.
\end{abstract}

\begin{keywords} 
Team Decision Theory, Game Theory, Convex Optimization.
\end{keywords}

\begin{AMS}
99J04, 49K04
\end{AMS}

\pagestyle{myheadings}
\thispagestyle{plain}
\markboth{ATHER GATTAMI}{Multi-Objective Linear Quadratic Team Optimization}

\section{Introduction}
We consider the problem of distributed decision
making with information constraints under linear quadratic
settings. For instance, information constraints appear naturally
when making decisions over networks. These problems can be
formulated as team problems. The team problem is an optimization problem
with several decision makers possessing different information aiming to optimize a common
objective. Early results in \cite{marschak:1955}  considered static team
theory in stochastic settings and a more general framework was introduced
by Radner \cite{radner}, where existence and uniqueness of solutions where shown.
Connections to dynamic team problems for control purposes where introduced in \cite{ho:chu}. 
In \cite{didinsky:basar:1992}, the team problem
with two team members was solved. The solution cannot be easily
extended to more than two players since it uses the fact that the
two members have common information; a property that doesn't
necessarily hold for more than two players. Also, a nonlinear team
problem with two team members was considered in
\cite{bernhard:99}, where one of the team members is assumed to
have full information whereas the other member has only access to
partial information about the state of the world. Related team
problems with exponential cost criterion were considered in
\cite{krainak:82}. Optimizing team problems with respect to
\textit{affine} decisions in a minimax quadratic cost was shown to
be equivalent to stochastic team problems with exponential cost,
see \cite{fan:1994}. The connection is not clear when the
optimization is carried out over nonlinear decision functions.
The deterministic version (minimizing the worst case scenario) 
of the linear quadratic team decision problem was solved in \cite{gattami:bob:rantzer}.

In this paper, we will consider both Gaussian and deterministic settings(worst case scenario)
for team decision problems under additional quadratic constraints. It's
well-known that additional constraints, although convex, could
give rise to complex optimization problems if the optimized variables are functions
(as opposed to real numbers). For instance linear functions, that is functions of the 
form $\mu(x) = Kx$ where $K$ is a real matrix, are no longer optimal. We will illustrate this fact by the following example:

\begin{ex}
For $x\in \mathbb{R}$, we want to minimize the objective function 
$$|u|^2$$  
subject to $$|x-u|^2\leq \gamma$$

Some Hilbert space theory shows that the optimal $u$ is given by

$$
u=\mu(x) = (|x|-\sqrt{\gamma})x/|x| ~~ if~~  |x|^2>\gamma, 
$$
and
$$
u=\mu(x) = 0 ~~ otherwise. 
$$
Obviously, the optimal $u$ is a nonlinear function of $x$. \\
\end{ex}

Increasing the dimension of $x$, and adding constraints on the structure of $u$, for instance $x\in \mathbb{R}^N$ 
and $u=\mu(x)=(\mu(x_1), .., \mu(x_N))$, certainly makes the constrained optimization more complicated.
The example above shows that, in spite of having a convex optimization carried out 
over a Hilbert space, the optimal decision function is nonlinear. However, we show in the upcoming sections that
multi-objective problems behave nicely when considering the \textit{expected} values of the objectives in the Gaussian case, in the sense that linear decisions are optimal. For the deterministic counterpart which is not an optimization problem over a Hilbert space, we show how to find the linear optimal decisions by semidefinite programming. However, the optimality of the linear decisions remains and open question.

\section{Notation}
The following table gives a list of the notation we are going to usee throughout the text:\\

\begin{tabular}{ll}
$\mathbb{S}^n$& The set of $n\times n$ symmetric matrices.\\
$\mathbb{S}^n_{+}$& The set of $n\times n$ symmetric positive\\
& semidefinite matrices.\\
$\mathbb{S}^n_{++}$& The set of $n\times n$ symmetric positive\\
& definite matrices.\\
$\mathcal{M}$& The set of measurable functions.\\
$\mathcal{C}$& The set of functions
$\mu:\mathbb{R}^p\rightarrow \mathbb{R}^m$ with\\
& $\mu(y)=(\mu_1^T(y_1),\mu_2^T(y_2), ..., \mu_N^T(y_N))^T$,\\
& $\mu_i:\mathbb{R}^{p_i}\rightarrow \mathbb{R}^{m_i}$,
$\sum_{i} m_i=m$, $\sum_{i} p_i=p$.\\
$[A]_{ij}$& The element of $A$ in position $(i, j)$.\\
$\succeq$ & $A\succeq B$ $\Longleftrightarrow$ $A-B\in \mathbb{S}^n_{+}$.\\
$\succ$ & $A\succ B$ $\Longleftrightarrow$ $A-B\in \mathbb{S}^n_{++}$.\\
$\otimes$& The Kronecker binary operation\\
& between two matrices $A$ and $B$, $A\otimes B$.\\
$\mathbf{Tr}$& $\mathbf{Tr}[A]$ is the trace of the matrix $A$.\\
$\mathcal{N}(m,X)$&  The set of Gaussian variables with\\
& mean $m$ and covariance $X$.
\end{tabular}

\section{Linear Quadratic Gaussian Team Theory}
In this section we will review some classical results in
stochastic team theory with new simpler proofs for the linear quadratic case, 
that first appeared in \cite{gattami:mtns06} and \cite{gattami:phd}.

In the static team decision problem, one would like to
solve
\begin{equation}
\label{static}
\begin{aligned}
\min_{\mu}\hspace{2mm} & \mathbf{E}\left[
\begin{matrix}
x\\
u
\end{matrix}
\right]^T
\left[
\begin{matrix}
Q_{xx} & Q_{xu}\\
Q_{ux} & Q_{uu}
\end{matrix}
\right]
\left[
\begin{matrix}
x\\
u
\end{matrix}
\right]\\
\text{subject to } & y_i=C_ix+v_i\\
                   & u_i = \mu_i(y_i)\\
                   & \text{for } i=1,..., N.
\end{aligned}
\end{equation}
Here, $x$ and $v$ are independent Gaussian variables taking values in
$\mathbb{R}^n$ and $\mathbb{R}^{p}$, respectively, with
$x\sim \mathcal{N}(0,V_{xx})$ and $v\sim \mathcal{N}(0,V_{vv})$.
Also, $y_i$ and $u_i$ will be stochastic variables taking values in
$\mathbb{R}^{p_i}$, $\mathbb{R}^{m_i}$, respectively, and
$p_1+...+p_N=p$. We assume that
\begin{equation}
\left[\begin{matrix}
Q_{xx} & Q_{xu}\\
Q_{ux} & Q_{uu}
\end{matrix}\right]\in \mathbb{S}^{m+n},
\end{equation}
and $Q_{uu}\in \mathbb{S}^{m}_{++}$, $m=m_1+\cdots+m_N$.

If full state  information about $x$ is available to each
\textit{decision maker} $u_i$, the minimizing $u$ can be
found easily by completion of squares. It is given by $u=Lx$, where
$L$ is the solution to
$$Q_{uu}L=-Q_{ux}.$$
Then, the cost function in (\ref{static}) can be rewritten as
\begin{equation}
\label{cost}
\begin{aligned}
J(x,u) & = \mathbf{E}\{x^T(Q_{xx}-L^TQ_{uu}L)x\}+\mathbf{E}\{(u-Lx)^TQ_{uu}(u-Lx)\}.
\end{aligned}
\end{equation}
Minimizing the cost function $J(x,u)$, is equivalent to
minimizing $$\mathbf{E}\{(u-Lx)^TQ_{uu}(u-Lx)\},$$
since nothing can be done about
$\mathbf{E}\{x^T(Q_{xx}-L^TQ_{uu}L)x\}$
(the cost when $u$ has full information).

The next theorem is due to Radner \cite{radner},
but we give a different formulation and proof that is simpler, which relies 
on the structure of the linear quadratic Gaussian setting:\vspace{2mm}

\begin{thm}
\label{radner1}
Let $x$ and $v_i$ be Gaussian variables with zero mean,
taking values in $\mathbb{R}^n$ and $\mathbb{R}^{p_i}$, respectively,
with $p_1+...+p_N=p$.
Also, let $u_i$ be a stochastic variable taking values in
$\mathbb{R}^{m_i}$,
$Q_{uu}\in \mathbb{S}^{m}_{++}$, $m=m_1+\cdots+m_N$,
$L\in \mathbb{R}^{m\times n}$, $C_i\in \mathbb{R}^{p_i\times n}$,
for $i=1, ..., N$.
Then, the optimal decision $\mu$ to the optimization problem
\begin{equation}
\label{opt1}
\begin{aligned}
\min_{\mu}\hspace{3mm}   & \mathbf{E}\{(u-Lx)^TQ_{uu}(u-Lx)\}\\
\textup{subject to}\hspace{3mm} & y_i=C_ix+v_i\\
                   & u_i = \mu_i(y_i)\\
                   & \text{for } i=1,..., N.
\end{aligned}
\end{equation}
is unique and linear in $y$.\\
\end{thm}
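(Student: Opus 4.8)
The plan is to recognize this as an orthogonal-projection problem in a suitable Hilbert space and then exploit the Gaussian structure to show that the projection is linear. Since $Q_{uu}\succ 0$, I would equip the space of square-integrable decisions with the inner product $\langle u,w\rangle=\mathbf{E}\{u^TQ_{uu}w\}$, which is equivalent to the standard $L^2$ inner product. Let $H$ denote the set of admissible decisions, i.e. those $u=(u_1^T,\ldots,u_N^T)^T$ for which each block $u_i$ is $\sigma(y_i)$-measurable and square integrable. This is a product of closed subspaces of $L^2$, hence itself closed, so the objective $\mathbf{E}\{(u-Lx)^TQ_{uu}(u-Lx)\}=\|u-Lx\|^2$ has a unique minimizer over $H$, namely the orthogonal projection of $Lx$ onto $H$. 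This already delivers the uniqueness claim; the entire remaining task is to prove that this projection is linear in $y$.

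Next I would record the variational (orthogonality) characterization of the projection: $u^\star\in H$ is optimal if and only if the residual is orthogonal to every admissible perturbation, $\mathbf{E}\{(u^\star-Lx)^TQ_{uu}\,\delta\}=0$ for all $\delta\in H$. Taking $\delta$ supported in a single block $k$ and ranging over all $\sigma(y_k)$-measurable functions, this is equivalent to the person-by-person conditions $\mathbf{E}\{[Q_{uu}(u^\star-Lx)]_k\mid y_k\}=0$ for $k=1,\ldots,N$, where $[\cdot]_k$ denotes the $k$-th block.

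The device for proving linearity is to first minimize over the much smaller, finite-dimensional class of linear policies $u_i=K_iy_i$. The orthogonal projection $u^{\mathrm{lin}}$ of $Lx$ onto this closed finite-dimensional subspace exists, and its optimality there gives orthogonality of the residual to all linear perturbations, i.e. $\mathbf{E}\{[Q_{uu}(u^{\mathrm{lin}}-Lx)]_k\,y_k^T\}=0$; that is, the $k$-th residual block $r_k:=[Q_{uu}(u^{\mathrm{lin}}-Lx)]_k$ is uncorrelated with $y_k$. I would then invoke the defining Gaussian fact: because $u^{\mathrm{lin}}$ and $Lx$ are linear images of the jointly Gaussian vector $(x,v)$, the block $r_k$ is jointly Gaussian with $y_k$ (and zero mean), and for jointly Gaussian variables the conditional expectation coincides with the best linear estimator $\mathbf{E}\{r_ky_k^T\}(\mathbf{E}\{y_ky_k^T\})^{+}y_k$. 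Being uncorrelated with $y_k$ therefore forces $\mathbf{E}\{r_k\mid y_k\}=0$. Hence $u^{\mathrm{lin}}$ satisfies the full orthogonality condition over all of $H$, not merely over linear perturbations, so by uniqueness of the projection $u^\star=u^{\mathrm{lin}}$ is linear in $y$.

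I expect the crux to be exactly this last upgrade: passing from \emph{orthogonality to linear functions of $y_k$} to \emph{orthogonality to all measurable functions of $y_k$}. This is where Gaussianity is indispensable, through the identity that the conditional expectation equals the linear least-squares estimator; it is precisely this identity that breaks down for non-Gaussian $x$ or once nonlinear constraints are added, which is the phenomenon the introductory example is warning about. A secondary, routine point I would still need to confirm is the closedness of $H$ together with the well-posedness of the finite-dimensional conditions for the $K_i$, handling a possibly singular covariance of $y_k$ via a pseudoinverse; these are straightforward once the projection framework is set up.
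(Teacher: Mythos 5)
Your proposal is correct and takes essentially the same route as the paper: project $Lx$ onto the subspace of linear policies under the $Q_{uu}$-weighted inner product, use the resulting orthogonality against linear functions of $y_k$ together with Gaussianity to upgrade to orthogonality against \emph{all} measurable adapted decisions, and conclude optimality and uniqueness. The only cosmetic differences are that the paper phrases the Gaussian step as ``uncorrelated jointly Gaussian implies independent'' rather than via conditional expectations, and verifies optimality by an explicit completion-of-squares expansion instead of invoking closedness of $H$ and the abstract projection theorem.
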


\begin{proof}
Let $\mathcal{Z}$ be the linear space of functions such that $z
\in \mathcal{Z}$ if $z_i$ is a linear transformation of $y_i$,
that is $z_i=A_i y_i$ for some real matrix $A_i\in
\mathbb{R}^{m_i\times p_i}$.
Since $Q_{uu}\succ 0$, $\mathcal{Z}$ is a linear
space under the inner product
$$\langle g,h \rangle =\mathbf{E}\{g^T Q_{uu}h\},$$
and norm
$$||g||^2=\mathbf{E}\{g^T Q_{uu}g\}.$$
The optimization problem in (\ref{opt1}) where we search for the \textit{linear}
optimal decision can be written as
\begin{equation}
\label{optinner}
\begin{aligned}
\min_{u\in \mathcal{Z}}\hspace{3mm}   & ||u-Lx||^2
\end{aligned}
\end{equation}
Finding the best linear optimal decision $u^*\in \mathcal{Z}$ to the above
problem is equivalent to finding the shortest distance from the
subspace $\mathcal{Z}$ to the element $Lx$, where the minimizing
$u^*$ is the projection of $Lx$ on $\mathcal{Z}$, and hence
unique. Also, since $\mu^*$ is the projection, we have
$$0=\langle u^*-Lx, \mu \rangle=\mathbf{E}\{(u^*-Lx)^TQ_{uu}u\},$$
for all $u\in \mathcal{Z}$. In particular, for
$f_i=(0,0,...,z_i,0,...,0)\in \mathcal{Z}$, we have
$$
\mathbf{E}\{(u^*-Lx)^TQ_{uu}f_i\} =
\mathbf{E}\{[(u^*-Lx)^TQ_{uu}]_i z_i\}=0.
$$
The Gaussian assumption implies that $[(u^*(y)-Lx)^TQ_{uu}]_i$
is independent of $z_i=A_iy_i$, for all linear transformations
$A_i$. This gives in turn that $[(u^*-Lx)^TQ_{uu}]_i$ is
independent of $y_i$. Hence, for any decision $\mu\in
\mathcal{M}\cap \mathcal{C}$, \textit{linear or nonlinear}, we
have that
\begin{equation*}
\begin{aligned}
\mathbf{E}(u^*-Lx)^TQ_{uu}\mu(y)=
\sum_i \mathbf{E}\{[(u^*-Lx)^TQ_{uu}]_i \mu_i(y_i)\}=0,
\end{aligned}
\end{equation*}
and
\begin{equation*}
\begin{aligned}
\mathbf{E}(\mu(y) - & Lx)^T  Q_{uu}(\mu(y)-Lx) \\ 
&= 	\mathbf{E}(u^*-Lx+\mu(y)-u^*)^TQ_{uu}(u^*-Lx+\mu(y)-u^*)\\
&=  	\mathbf{E}(u^*-Lx)^TQ_{uu}(u^*-Lx)+
 	\mathbf{E}(\mu(y)-u^*)^TQ_{uu}(\mu(y)-u^*) \\
&+	2\mathbf{E}(u^*-Lx)^TQ_{uu}(\mu(y)-u^*)\\
&=	\mathbf{E}(u^*-Lx)^TQ_{uu}(u^*-Lx)+
 	\mathbf{E}(\mu(y)-u^*)^TQ_{uu}(\mu(y)-u^*)\\
&\geq \mathbf{E}(u^*-Lx)^TQ_{uu}(u^*-Lx)
\end{aligned}
\end{equation*}
with equality if and only if $\mu(y)=u^*$. This
concludes the proof.\\
\end{proof}

\begin{prp}
\label{radner2} Let $x$ and $v_i$ be independent Gaussian
variables taking values in $\mathbb{R}^n$ and $\mathbb{R}^{p_i}$,
respectively with $x\sim \mathcal{N}(0,V_{xx})$, $v\sim
\mathcal{N}(0,V_{vv})$. Also, let $u_i$ be a stochastic variable
taking values in $\mathbb{R}^{m_i}$, $m=m_1+\cdots + m_N$,
$Q_{xu}\in \mathbb{R}^{n\times m}$, $Q_{uu}\in
\mathbb{S}^{m}_{++}$, $C_i\in \mathbb{R}^{p_i\times n}$, and
$L=-Q_{uu}^{-1}Q_{ux}$. Set $y_i=C_ix+v_i$. Then, the optimal
solution $K_1, ..., K_N$ to the optimization problem
\begin{equation}
\begin{aligned}
\min_{K_i}\hspace{3mm} & \mathbf{E}(u-Lx)^TQ_{uu}(u-Lx)\\
\text{subject to}\hspace{3mm} &  u_i = K_iy_i\\
& \text{for } i=1,..., N.
\end{aligned}
\end{equation}
is the solution of the linear system of equations
\begin{equation}
\label{teamsol}
\begin{aligned}
&\sum_{j=1}^{N}
     [Q_{uu}]_{ij}K_j(C_jV_{xx}C_i^T+[V_{vv}]_{ji})
  = -[Q_{ux}]_{i}V_{xx}C_i^T,\hspace{4mm}\text{for } i= 1, ..., N.
\end{aligned}
\end{equation}
\end{prp}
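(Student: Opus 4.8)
The plan is to leverage Theorem \ref{radner1}, which already guarantees that the global minimizer over all measurable decentralized decisions is linear in $y$. Hence restricting the search to the parametrized family $u_i = K_i y_i$ loses no generality, and the only remaining task is to characterize the optimal gains $K_1,\ldots,K_N$. Because $Q_{uu}\succ 0$ and each $u_i$ depends linearly on $K_i$, the objective $\mathbf{E}(u-Lx)^T Q_{uu}(u-Lx)$ is a convex quadratic function of the matrix variables $(K_1,\ldots,K_N)$, so the first-order stationarity conditions are both necessary and sufficient for optimality, and any solution they produce is automatically the unique optimum.

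Rather than differentiate the cost directly, I would reuse the projection identity already established inside the proof of Theorem \ref{radner1}, where it was shown that the optimal $u^*$ satisfies $\mathbf{E}\{[(u^*-Lx)^T Q_{uu}]_i\, z_i\}=0$ for every linear $z_i=A_i y_i$ and every $i$. Since $A_i$ ranges over all real matrices, this block-wise orthogonality is equivalent to the matrix condition
\begin{equation*}
\mathbf{E}\{[Q_{uu}(u^*-Lx)]_i\, y_i^T\}=0, \qquad i=1,\ldots,N,
\end{equation*}
where $[\,\cdot\,]_i$ denotes the $i$-th block of dimension $m_i$. This is the decentralized analogue of the classical normal equations: orthogonality is enforced only against variations supported on the $i$-th information block, which is precisely the information constraint $u_i=\mu_i(y_i)$.

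Next I would substitute $u^*_j = K_j y_j = K_j(C_j x + v_j)$ and expand the $i$-th block as $[Q_{uu}(u^*-Lx)]_i = \sum_j [Q_{uu}]_{ij}\big(K_j y_j - L_j x\big)$, where $L_j$ is the $j$-th block row of $L$. Using $y_i=C_ix+v_i$ together with the standing assumptions that $x\sim\mathcal{N}(0,V_{xx})$ and $v\sim\mathcal{N}(0,V_{vv})$ are independent and zero-mean, the only surviving cross-moments are $\mathbf{E}\{x y_i^T\}=V_{xx}C_i^T$ and $\mathbf{E}\{v_j y_i^T\}=[V_{vv}]_{ji}$, so that $\mathbf{E}\{(K_j y_j - L_j x)y_i^T\} = K_j\big(C_j V_{xx}C_i^T + [V_{vv}]_{ji}\big) - L_j V_{xx}C_i^T$. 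Collecting terms turns the orthogonality condition into
\begin{equation*}
\sum_{j=1}^N [Q_{uu}]_{ij} K_j\big(C_j V_{xx}C_i^T + [V_{vv}]_{ji}\big) = \Big(\sum_{j=1}^N [Q_{uu}]_{ij} L_j\Big) V_{xx} C_i^T .
\end{equation*}

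Finally I would simplify the right-hand side using $L=-Q_{uu}^{-1}Q_{ux}$, i.e.\ $Q_{uu}L=-Q_{ux}$, so that $\sum_j [Q_{uu}]_{ij} L_j = [Q_{uu}L]_i = -[Q_{ux}]_i$. This collapses the right-hand side to $-[Q_{ux}]_i V_{xx} C_i^T$ and yields exactly \eqref{teamsol}. The main obstacle is purely bookkeeping: keeping the block partitions of $Q_{uu}$, $L$, $V_{vv}$ and $y$ mutually consistent, and remembering that $[\,\cdot\,]_{ij}$ here denotes matrix blocks rather than scalar entries throughout the computation. Convexity of the objective then ensures that the solution of this linear system is the global minimizer, which completes the proof.
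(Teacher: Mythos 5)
Your proof is correct, and it reaches the conclusion by a genuinely different route from the paper's. The paper's proof is a self-contained direct computation: it stacks the gains as $K=\text{diag}(K_1,\dots,K_N)$, writes the cost as the explicit quadratic $f(K)=\mathbf{Tr}\left[Q_{uu}K(CV_{xx}C^T+V_{vv})K^T-2Q_{uu}LV_{xx}C^TK^T+Q_{uu}LV_{xx}L^T\right]$, expands it block-wise, and solves the stationarity condition $\nabla_{K_i}f(K)=0$, which after substituting $Q_{uu}L=-Q_{ux}$ is exactly (\ref{teamsol}). You instead recycle the projection identity established inside the proof of Theorem \ref{radner1} --- the residual $u^*-Lx$ is orthogonal, in the $Q_{uu}$-weighted inner product, to every linear decentralized decision --- and correctly convert it into the block normal equations $\mathbf{E}\{[Q_{uu}(u^*-Lx)]_i\,y_i^T\}=0$, which you then evaluate via second moments. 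The two derivations produce the same linear system (the paper's gradient equations \emph{are} your normal equations), but your version makes the least-squares geometry explicit and shows as a byproduct that Gaussianity is superfluous for this proposition, since only the second moments of $(x,v)$ enter; the paper's version, in exchange, leans on nothing outside the proposition itself and exhibits convexity concretely through the expanded quadratic form. One caveat: your remark that any solution of the stationarity conditions is ``automatically the unique optimum'' is slightly too strong. The optimal decision $u^*$ is unique as a random variable (it is a projection onto a finite-dimensional subspace), but the gains $K_i$ themselves are unique only if each $\mathbf{E}\{y_iy_i^T\}=C_iV_{xx}C_i^T+[V_{vv}]_{ii}$ is nonsingular; in degenerate cases (\ref{teamsol}) may have multiple solutions, all globally optimal. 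Convexity gives you global optimality of every solution of the system, which is all the proposition actually requires.
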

\vspace{2mm}
\begin{proof}
Let $K=\text{diag}(K_1, ..., K_N)$ and
$
C=\left[\begin{matrix}
C_1^T & \cdots & C_N^T
\end{matrix}\right]^T.
$
The problem of finding the optimal linear feedback law $u_i=K_iy_i$ can be
written as
\begin{equation}
\begin{aligned}
\min_{K_i}\hspace{2mm} &
\mathbf{Tr}[\mathbf{E} \{Q_{uu}(u-Lx)(u-Lx)^T\}]\\
\text{subject to } & u = K(Cx+v)
\end{aligned}
\end{equation}
Now
\begin{equation}
\label{f(k)}
\begin{aligned}
f(K) &= \mathbf{Tr}\left.\mathbf{E}
       \{ Q_{uu}(u-Lx)(u-Lx)^T\}\right.\\
     &= \mathbf{Tr}\left.\mathbf{E}\{
     Q_{uu}(KCx+Kv-Lx)(KCx+Kv-Lx)^T\}\right.\\
     &= \mathbf{Tr}\left[\mathbf{E}\{
       Q_{uu}K(C x x^T C^T+v v^T)K^T-
     2Q_{uu}L x x^T C^T K^T\right.\\
     &\left.+Q_{uu}L x x^T L^T+2Q_{uu}(KC-L)xv^T K^T\}\right]\\
     &= \mathbf{Tr}\left[
     Q_{uu}K(CV_{xx}C^T+V_{vv})K^T-
     2Q_{uu}LV_{xx}C^TK^T+Q_{uu}LV_{xx}L^T)\right]\\
     &= \mathbf{Tr}\left[\sum_{i,j=1}^{N}
     [Q_{uu}]_{ij}K_j(C_jV_{xx}C_i^T+[V_{vv}]_{ji})K_i^T-
     2\sum_{i,j=1}^{N}[Q_{uu}]_{ij}L_jV_{xx}C_i^TK_i^T\right]\\
     &+\mathbf{Tr}[Q_{uu}LV_{xx}L^T].
\end{aligned}
\end{equation}
A minimizing $K$ is obtained by solving $\nabla_{K_i} f(K)=0$:
\begin{equation}
\label{almost}
\begin{aligned}
0 &= \nabla_{K_i} f(K)\\
  &= 2\sum_{j=1}^{N}
     [Q_{uu}]_{ij}K_j(C_jV_{xx}C_i^T+[V_{vv}]_{ji})-
  2\sum_{j=1}^{N}[Q_{uu}]_{ij}L_jV_{xx}C_i^T.
\end{aligned}
\end{equation}

Since $Q_{uu}L=-Q_{ux}$, we get that
\[\begin{array}{c}
\sum_{j=1}^N[Q_{uu}]_{ij}L_jV_{xx}C_i^T=
-[Q_{ux}]_{i}V_{xx}C_i^T,
\end{array}\]
and the equality in (\ref{almost}) is equivalent to
\[\begin{array}{c}
\sum_{j=1}^{N}
     [Q_{uu}]_{ij}K_j(C_jV_{xx}C_i^T+[V_{vv}]_{ji})
  = -[Q_{ux}]_{i}V_{xx}C_i^T,
\end{array}\]
and the proof is complete.
\end{proof}

In general, separation does not hold for the static team problem
when constraints on the information available for every decision
maker $u_i$ are imposed. That is, the optimal decision is
\textit{not} given by $u_i=L\hat{x}_i$, where $\hat{x}_i$ is the
optimal estimated value of $x$ by decision maker $i$. We show it
by considering the following example.

\begin{ex}
Consider the team problem
\begin{equation*}
\begin{aligned}
\text{minimize } & \mathbf{E}\left[
\begin{matrix}
x\\
u
\end{matrix}
\right]^T
\left[
\begin{matrix}
Q_{xx} & Q_{xu}\\
Q_{ux} & Q_{uu}
\end{matrix}
\right]
\left[
\begin{matrix}
x\\
u
\end{matrix}
\right]\\
\text{subject to } & y_i=C_ix+v_i\\
                   & u_i = \mu_i(y_i)\\
                   & \text{for } i=1,..., N
\end{aligned}
\end{equation*}
The data we will consider is:\\
\begin{equation*}
\begin{aligned}
&N=2, \hspace{2mm} C_1=C_2=1, \hspace{2mm} x\sim \mathcal{N}(0,1),\hspace{1mm} v_1\sim \mathcal{N}(0,1),\hspace{1mm}
v_2\sim \mathcal{N}(0,1)\\
&Q_{xx}=1,\hspace{2mm} Q_{uu}=
\left[
  \begin{matrix}
    2 & 1\\
    1 & 2
  \end{matrix}
\right],\hspace{2mm}
Q_{xu}=Q_{ux}^T=
-\left[
  \begin{matrix}
    1 & 1
  \end{matrix}
\right]
\end{aligned}
\end{equation*}
The best decision with full information is given by
\begin{equation*}
\begin{aligned}
u &=-Q_{uu}^{-1}Q_{ux}x=
\left[
  \begin{matrix}
    \frac{2}{3} & -\frac{1}{3}\\
    -\frac{1}{3} & \frac{2}{3}
  \end{matrix}
\right]
\left[
  \begin{matrix}
    1\\
    1
  \end{matrix}
\right] x=
\left[
  \begin{matrix}
    \frac{1}{3}\\
    \frac{1}{3}
  \end{matrix}
\right] x.
\end{aligned}
\end{equation*}
The optimal estimate of $x$ of decision maker 1 is
$$\hat{x}_1=\mathbf{E} \{x|y_1\}=\frac{1}{2}y_1,$$
and of decision maker 2
$$\hat{x}_2=\mathbf{E} \{x|y_2\}=\frac{1}{2}y_2.$$
Hence, the decision where each decision maker combines the best
deterministic decision with her best estimate of $x$ is given by
\begin{equation*}
\begin{aligned}
u_i &=\frac{1}{3}\hat{x}_i=\frac{1}{6}y_i,
\end{aligned}
\end{equation*}
for $i=1,2$. This policy gives a cost equal to $0.611$.
However, solving the team problem yields
$K_1=K_2=\frac{1}{5}$,
and hence the optimal team decision is given by
$$
u_i=\frac{1}{5}y_i.
$$
The cost obtained from the team problem is $0.600$.
Clearly, separation does not hold in team decision problems.
\end{ex}

\section{Team Decision Problems with Power Constraints}
Consider the modified version of the optimization problem (\ref{static}):
\begin{equation}
\label{power}
\begin{aligned}
\min_{\mu}\hspace{2mm} & \mathbf{E}
\left[
\begin{matrix}
x\\
u
\end{matrix}
\right]^T
\left[
\begin{matrix}
Q & S\\
S^T & R
\end{matrix}
\right]
\left[
\begin{matrix}
x\\
u
\end{matrix}
\right]\\
\text{subject to } & y_i=C_ix\\
                   & u_i = \mu_i(y_i)\\
                   & \gamma_i\geq \E \|\mu_i(y_i)\|^2\\
                   & \text{for } i=1,..., N.
\end{aligned}
\end{equation}
The difference from Radner's original formulation is that we have added 
power constraints to the decision functions, $\gamma_i\geq \E \|\mu_i(y_i)\|^2$.

In optimization (minimization) problems, you define the value to be infinite if there doesn't exist any
feasible decision variable that satisfy the constraints. Therefore, usually, one assumes that there is
a feasible point, and hence the value must be finite. Existence conditions are hard to derive usually in spite of 
problems might be convex. So in practice, you run the algorithm and either you get a finite number, or 
it goes indefinitely. Conditions where you a decide whether you have a feasible problem or not are 
of great interest of course. It's a nontrivial problem that is outside the scope of this paper.

In the sequel, we will prove a more general theorem, where we consider power constraints 
on a set of quadratic forms in both the state $x$ and the decision function $\mu$.

\begin{thm}
\label{gattami}
Let $x$ be a Gaussian variable with zero mean and given covariance 
matrix $X$,
taking values in $\mathbb{R}^n$.
Also, let
$
\left[
\begin{matrix}
Q_0 & S_0\\
S_0^T & R_0
\end{matrix}
\right]\in \sym_{+}^{m+n}
$,   $R_0\in \sym_{++}^m$, 
$
\left[
\begin{matrix}
Q_j & S_j\\
S_j^T & R_j
\end{matrix}
\right]\in \sym^{m+n}
$,
and $R_j\in \sym_{+}^m$, 
for $j = 1, ..., M$.
Assume that the optimization problem
\begin{equation}
\label{main}
\begin{aligned}
\min_{\mu\in\mathcal{C}}\hspace{2mm} & \mathbf{E}\left[
\begin{matrix}
x\\
\mu(x)
\end{matrix}
\right]^T
\left[
\begin{matrix}
Q_0 & S_0\\
S_0^T & R_0
\end{matrix}
\right]
\left[
\begin{matrix}
x\\
\mu(x)
\end{matrix}
\right]\\
\textup{subject to } & 
\mathbf{E}\left[
\begin{matrix}
x\\
\mu(x)
\end{matrix}
\right]^T
\left[
\begin{matrix}
Q_j & S_j\\
S_j^T & R_j
\end{matrix}
\right]
\left[
\begin{matrix}
x\\
\mu(x)
\end{matrix}
\right] \leq \gamma_j\\ 
& j=1,..., M
\end{aligned}
\end{equation}
is feasible. Then, linear decisions $\mu$ given by $\mu(x) = K(X) x$, with $K(X)\in \mathbb{K}$, are optimal.\\
\end{thm}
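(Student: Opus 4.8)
The plan is to reduce the infinite-dimensional problem over $\mu\in\mathcal{C}$ to a finite-dimensional, structured one by observing that both the objective and every constraint see $\mu$ only through second moments. First I would introduce the moment matrix
$$
W(\mu)=\mathbf{E}\left[\begin{matrix} x\\ \mu(x)\end{matrix}\right]\left[\begin{matrix} x\\ \mu(x)\end{matrix}\right]^T
=\left[\begin{matrix} X & U^T\\ U & Y\end{matrix}\right],\qquad U=\mathbf{E}\{\mu(x)x^T\},\ \ Y=\mathbf{E}\{\mu(x)\mu(x)^T\},
$$
and write $M_j=\left[\begin{smallmatrix} Q_j & S_j\\ S_j^T & R_j\end{smallmatrix}\right]$, so that each quadratic form becomes the linear functional $\mathbf{Tr}[M_j W(\mu)]$. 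The top-left block of $W(\mu)$ is pinned to $X$ for every $\mu$, so the entire program is determined by the pair $(U,Y)$.

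The heart of the argument is a Gaussian projection, exactly as in the proof of Theorem~\ref{radner1}. Letting $y_i$ denote the information available to player $i$ (jointly Gaussian with $x$, as in~(\ref{power})), I would split each policy into its linear least-squares part and a residual, $\mu_i(y_i)=K_i^{*}y_i+r_i(y_i)$ with $\mathbf{E}\{r_i y_i^T\}=0$. Writing $x=Ay_i+\tilde x$ with $\tilde x\perp y_i$, joint Gaussianity forces $\tilde x$ to be \emph{independent} of $y_i$, whence $\mathbf{E}\{r_i x^T\}=\mathbf{E}\{r_i y_i^T\}A^T+\mathbf{E}\{r_i\}\mathbf{E}\{\tilde x\}^T=0$; consequently $r_i$ is uncorrelated with $x$, and therefore with every $y_j$. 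Collecting the residuals into $r=(r_1,\dots,r_N)$ and the regression gains into a structured $K^{*}=K^{*}(X)\in\mathbb{K}$, this shows that the linear policy $\mu^{\mathrm{lin}}(x)=K^{*}x$ realizes the \emph{same} cross block $U$ while
$$
W(\mu)=W(\mu^{\mathrm{lin}})+\left[\begin{matrix} 0 & 0\\ 0 & \Delta\end{matrix}\right],\qquad \Delta=\mathbf{E}\{r r^T\}\succeq 0 .
$$

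It then remains to exploit the sign hypotheses. Since $\mathbf{Tr}[M_j W(\mu)]=\mathbf{Tr}[M_j W(\mu^{\mathrm{lin}})]+\mathbf{Tr}[R_j\Delta]$ and $R_j\succeq 0$, the residual can only \emph{increase} each constraint value, so $\mu^{\mathrm{lin}}$ stays feasible whenever $\mu$ is. Likewise $R_0\succ 0$ gives $\mathbf{Tr}[M_0 W(\mu^{\mathrm{lin}})]\le \mathbf{Tr}[M_0 W(\mu)]$, so the objective does not increase. Hence every feasible $\mu\in\mathcal{C}$ is dominated by a structured linear policy, which yields $\inf_{\mathcal C}=\inf_{\mathbb K}$; the reduced finite-dimensional (semidefinite) problem over $K\in\mathbb{K}$ then attains its optimum, establishing optimality of a linear decision $\mu(x)=K(X)x$.

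I expect the main obstacle to be the Gaussian decorrelation step: establishing that the projection residual $r_i$ is uncorrelated not merely with its own observation $y_i$ but with the \emph{entire} state $x$, hence with the other players' observations. This is where Gaussianity is indispensable, and it is also what guarantees the dominating gain $K^{*}$ again lies in $\mathbb{K}$ rather than being an arbitrary full matrix, so the decentralized structure $\mu\in\mathcal{C}$ must be carried through carefully. The positive-semidefiniteness of the constraint blocks $R_j$ is the second essential ingredient: without it the residual could slacken a constraint, and a nonlinear policy could then outperform every linear one.
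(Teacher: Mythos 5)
Your argument is correct, but it takes a genuinely different route from the paper's. The paper proceeds by Lagrangian duality: it attaches multipliers $\lambda_j$ to the $M$ constraints, homogenizes with an extra multiplier $\lambda_0$ on the simplex $\sum_{j=0}^M\lambda_j=1$, bounds the resulting value $p_0$ below by $\max_\lambda\min_\mu$, invokes Theorem \ref{radner1} (Radner) to replace the inner minimum over $\mathcal{C}$ by a minimum over $\mathbb{K}$ (valid because a maximizing $\lambda_0$ is positive, forcing $R=\sum_j\lambda_jR_j\succ0$), and then exchanges $\max_\lambda$ and $\min_K$ via the saddle-point result (Proposition \ref{minmaxtheorem}) after a radial-unboundedness/compactness argument; the chain of inequalities closes back on $p_0$, giving linear optimality of the value. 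You never dualize: you show that each feasible $\mu\in\mathcal{C}$ is dominated outright by its own structured linear regression part, because Gaussianity makes the residuals uncorrelated with the whole state $x$ (not merely with each player's own observation), so $W(\mu)=W(\mu^{\mathrm{lin}})+\mathrm{diag}(0,\Delta)$ with $\Delta\succeq0$, and $R_j\succeq0$ for every $j$ makes this correction harmless to feasibility and non-helpful to the objective. Both proofs turn on the same Gaussian projection fact --- the paper packages it inside Theorem \ref{radner1}, you use it directly --- but your route is more elementary (no minimax theorem, no compactness argument, no normalization step $p^\star=p_0/\lambda_0$) and yields something stronger: uniform domination of every feasible policy, hence clean attainment of the constrained optimum by some $K\in\mathbb{K}$, rather than only equality of optimal values; it also leads immediately to the SDP formulation of the next section. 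The cost is reliance on $R_j\succeq0$ for every constraint, exactly as you note, but the paper needs the same hypothesis to keep the scalarized $R$ positive definite, so nothing is lost. Two details worth writing out if you flesh this into a full proof: positive semidefiniteness of the objective block together with $R_0\succ0$ guarantees that any $\mu$ with finite cost has finite second moments, so the regression gains $K_i^*$ (and hence $K^*\in\mathbb{K}$) are well defined; and attainment of the reduced problem over $K$ requires the coercivity observation that $\mathbf{Tr}[K^TR_0KX]$ grows quadratically in $KX^{1/2}$, working modulo the kernel of $X$ when $X$ is singular.
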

\begin{proof}
Consider the expression
$$
\mathbf{E}
\left[
\begin{matrix}
x\\
\mu(x)
\end{matrix}
\right]^T
\left[
\begin{matrix}
Q_0 & S_0\\
S_0^T & R_0
\end{matrix}
\right]
\left[
\begin{matrix}
x\\
\mu(x)
\end{matrix}
\right]+\sum_{j=1}^M 
\lambda_j
\left( \mathbf{E}
\left[
\begin{matrix}
x\\
\mu(x)
\end{matrix}
\right]
\left[
\begin{matrix}
Q_j & S_j\\
S_j^T & R_j
\end{matrix}
\right]
\left[
\begin{matrix}
x\\
\mu(x)
\end{matrix}
\right]
-\gamma_j
\right).
$$
Take the expectation of a quadratic form with index $j$ to be larger than $\gamma_j$. Then, 
$\lambda_j \rightarrow \infty$ makes the value of the expression above infinite. On the other hand, if  the expectation of a quadratic form with index $j$ is smaller than $\gamma_j$,
then the maximizer $\lambda_j$ is optimal for  $\lambda_j = 0$.

Now let $p^\star$ be the optimal value of the optimization problem (\ref{main}), and consider the objective function
$$
\left[
\begin{matrix}
x\\
u
\end{matrix}
\right]^T
\left[
\begin{matrix}
Q_0 & S_0\\
S_0^T & R_0
\end{matrix}
\right]
\left[
\begin{matrix}
x\\
u
\end{matrix}
\right] = x^T(Q_0-S_0R_0^{-1}S_0^T)x+(u-R_0^{-1}S_0^Tx)^TR_0(u-R_0^{-1}S_0^Tx).
$$
We have that $Q_0-S_0R_0^{-1}S_0^T\succeq 0$, since it's the Schur complement of $R_0$ in the positive 
semi-definite matrix
$
\left[
\begin{matrix}
Q_0 & S_0\\
S_0^T & R_0
\end{matrix}
\right].
$
Since $R_0\succ 0$, a necessary condition for the objective function to be zero is that $u=R_0^{-1}S_0^Tx$, and so $u$ must be linear
(In order for $u$ to have the structure given by $\mathcal{C}$, 
$R_0^{-1}S_0^T$ must be in $\K$, to satisfy the information 
constraints).
 
Now assume that $p^\star>0$. We have
{\small
\begin{equation}
\label{pstar}
\begin{aligned}
p^\star &= \min_{\mu\in\mathcal{C}} \max_{\lambda_i\in \R_+}\hspace{1mm}
\mathbf{E}
\left[
\begin{matrix}
x\\
\mu(x)
\end{matrix}
\right]^T
\left[
\begin{matrix}
Q_0 & S_0\\
S_0^T & R_0
\end{matrix}
\right]
\left[
\begin{matrix}
x\\
\mu(x)
\end{matrix}
\right]+\sum_{j=1}^M 
\lambda_j
\left( \mathbf{E}
\left[
\begin{matrix}
x\\
\mu(x)
\end{matrix}
\right]
\left[
\begin{matrix}
Q_j & S_j\\
S_j^T & R_j
\end{matrix}
\right]
\left[
\begin{matrix}
x\\
\mu(x)
\end{matrix}
\right]
-\gamma_j
\right)\\
&= \min_{\mu\in\mathcal{C}} \max_{\lambda_i\in \R_+}\hspace{1mm}  \mathbf{E}
\left[
\begin{matrix}
x\\
\mu(x)
\end{matrix}
\right]^T
\left(
\left[
\begin{matrix}
Q_0 & S_0\\
S_0^T & R_0
\end{matrix}
\right]+
\sum_{j=1}^M 
\lambda_i
\left[
\begin{matrix}
Q_j & S_j\\
S_j^T & R_j
\end{matrix}
\right]
\right)
\left[
\begin{matrix}
x\\
\mu(x)
\end{matrix}
\right]-\sum_{j=1}^M \lambda_j\gamma_j.
\end{aligned}
\end{equation}
}
Now introduce $\lambda_0$ and the matrix
$$
\left[
\begin{matrix}
Q    	& S\\
S^T & R
\end{matrix}
\right]
=
\sum_{j=0}^M 
\lambda_j
\left[
\begin{matrix}
Q_j & S_j\\
S_j^T & R_j
\end{matrix}
\right],
$$
and consider the minimax problem
\begin{equation}
\label{p0}
\begin{aligned}
p_0 = \min_{\mu\in\mathcal{C}} \hspace{1mm}\max_{\substack{\lambda_j\geq 0\\ \sum_{j=0}^M \lambda_j=1}} \hspace{1mm}  \mathbf{E}
\left[
\begin{matrix}
x\\
\mu(x)
\end{matrix}
\right]^T
\left[
\begin{matrix}
Q & S\\
S^T & R
\end{matrix}
\right]
\left[
\begin{matrix}
x\\
\mu(x)
\end{matrix}
\right]-\sum_{j=1}^M \lambda_j\gamma_j.
\end{aligned}
\end{equation}
Note that a maximizing $\lambda_0$ must be positive, since $\lambda_0=0$ implies that $p_0\leq 0$,
while $\lambda_0>0$ gives  $p_0> 0$. We can always recover the optimal solutions of (\ref{pstar})
from that of (\ref{p0}) by dividing all variables by $\lambda_0$, that is $p^{\star}=p_0/\lambda_0$,
$ \lambda_j \mapsto \lambda_j/\lambda_0$, and $\mu\mapsto \mu/\lambda_0$.
Now we have the obvious inequality ($\min \max \{\cdot\}\geq \max  \min \{\cdot\}$)

\begin{equation*}
\begin{aligned}
p_0 &\geq 
\max_{\substack{\lambda_j\geq 0\\ \sum_{j=0}^M \lambda_j=1}}\min_{\mu\in\mathcal{C}} \hspace{1mm}  \mathbf{E}
\left[
\begin{matrix}
x\\
\mu(x)
\end{matrix}
\right]^T
\left(
\sum_{j=1}^M 
\lambda_j
\left[
\begin{matrix}
Q_j & S_j\\
S_j^T & R_j
\end{matrix}
\right]
\right)
\left[
\begin{matrix}
x\\
\mu(x)
\end{matrix}
\right]-\sum_{j=1}^M \lambda_j\gamma_j.
\end{aligned}
\end{equation*}

For any fixed values of $\lambda_j$, we have $R\succ 0$, so Theorem \ref{radner1} gives the
equality
$$
\min_{\mu\in\mathcal{C}}
\mathbf{E} \left[
\begin{matrix}
x\\
\mu(x)
\end{matrix}
\right]
\left[
\begin{matrix}
Q    	& S\\
S^T & R
\end{matrix}
\right]
\left[
\begin{matrix}
x\\
\mu(x)
\end{matrix}
\right]=
\min_{K\in\mathbb{K}}
\mathbf{E} \left[
\begin{matrix}
x\\
Kx
\end{matrix}
\right]
\left[
\begin{matrix}
Q    	& S\\
S^T & R
\end{matrix}
\right]
\left[
\begin{matrix}
x\\
Kx
\end{matrix}
\right], 
$$
where the minimizing $K$ is unique. Thus,

{\small
\begin{equation*}
\begin{aligned}
p_0 & \geq 
\max_{\substack{\lambda_j\geq 0\\ \sum_{j=0}^M \lambda_j=1}}\min_{\mu\in\mathcal{C}} \hspace{1mm}  \mathbf{E}
\left[
\begin{matrix}
x\\
\mu(x)
\end{matrix}
\right]^T
\left(
\sum_{j=0}^M 
\lambda_j
\left[
\begin{matrix}
Q_j & S_j\\
S_j^T & R_j
\end{matrix}
\right]
\right)
\left[
\begin{matrix}
x\\
\mu(x)
\end{matrix}
\right]-\sum_{j=1}^M \lambda_j\gamma_j\\
&=
\max_{\substack{\lambda_j\geq 0\\ \sum_{j=0}^M \lambda_j=1}}\min_{K\in \K} \hspace{1mm}  \mathbf{E}
\left[
\begin{matrix}
x\\
Kx
\end{matrix}
\right]^T
\left(
\sum_{j=0}^M 
\lambda_j
\left[
\begin{matrix}
Q_j & S_j\\
S_j^T & R_j
\end{matrix}
\right]
\right)
\left[
\begin{matrix}
x\\
Kx
\end{matrix}
\right]-\sum_{j=1}^M \lambda_j\gamma_j.
\end{aligned}
\end{equation*}
}
The objective function is radially unbounded in $K$ since $R \succ 0$. Hence, it can be restricted to
a compact subset of $\K$. Thus, 

\begin{equation*}
\begin{aligned}
p_0 & \geq 
\max_{\substack{\lambda_j\geq 0\\ \sum_{j=0}^M \lambda_j=1}}\hspace{1mm} \min_{K\in \K}  \hspace{1mm}  \mathbf{E}
\left[
\begin{matrix}
x\\
\mu(x)
\end{matrix}
\right]^T
\left(
\sum_{j=0}^M 
\lambda_j
\left[
\begin{matrix}
Q_j & S_j\\
S_j^T & R_j
\end{matrix}
\right]
\right)
\left[
\begin{matrix}
x\\
\mu(x)
\end{matrix}
\right]-\sum_{j=1}^M \lambda_j\gamma_j\\
&=
\min_{K\in \K} \max_{\substack{\lambda_j\geq 0\\ \sum_{j=0}^M \lambda_j=1}} \hspace{1mm}  \mathbf{E}
\left[
\begin{matrix}
x\\
Kx
\end{matrix}
\right]^T
\left(
\sum_{j=0}^M 
\lambda_j
\left[
\begin{matrix}
Q_j & S_j\\
S_j^T & R_j
\end{matrix}
\right]
\right)
\left[
\begin{matrix}
x\\
Kx
\end{matrix}
\right]-\sum_{j=1}^M \lambda_j\gamma_j\\
&\geq \min_{\mu\in\mathcal{C}} \max_{\substack{\lambda_j\geq 0\\ \sum_{j=0}^M \lambda_j=1}}\hspace{1mm}  \mathbf{E}
\left[
\begin{matrix}
x\\
\mu(x)
\end{matrix}
\right]^T
\left(
\sum_{j=0}^M 
\lambda_i
\left[
\begin{matrix}
Q_j & S_j\\
S_j^T & R_j
\end{matrix}
\right]
\right)
\left[
\begin{matrix}
x\\
\mu(x)
\end{matrix}
\right]-\sum_{j=1}^M \lambda_j\gamma_j\\
&= p_0,
\end{aligned}
\end{equation*}
where the equality is obtained by applying Proposition \ref{minmaxtheorem} in the Appendix, 
the second inequality follows from the fact that the set of linear decisions $Kx$, $K\in \K$, is a subset of $\mathcal{C}$, and the second equality follows from the definition of $p_0$. Hence, linear decisions are optimal, and the proof is complete.
\end{proof}

{\bf Remark:} Although Theorem 2 is stated and proved for $y=x$ and
$u=\mu(y)=\mu(x)$, it extends easily to the case $y=Cx$ for any matrix $C$, which often is the case in applications. 
\section{Computation of The Optimal Team Decisions}
The optimization problem that we would like to solve when assuming linear decisions is

\begin{equation}
\label{linopt}
	\begin{aligned}
		\min_{\gamma_0, K \in \K}    &\hspace{2mm} \gamma_0\\
		\text{subject to } &\hspace{2mm}
		\mathbf{E}\left[
		\begin{matrix}
		x\\
		KCx
		\end{matrix}
		\right]^T
		\left[
		\begin{matrix}
		Q_j & S_j\\
		S_j^T & R_j
		\end{matrix}
		\right]
		\left[
		\begin{matrix}
		x\\
		KCx
		\end{matrix}
		\right] \leq \gamma_j, \hspace{2mm} j=0, ..., M,\\
		&\hspace{2mm} x\sim \mathcal{N}(0,X^2).
	\end{aligned}
\end{equation}
Note that we can write the constraints as
\begin{equation}
\begin{aligned}
\mathbf{E}\left[
		\begin{matrix}
		x\\
		KCx
		\end{matrix}
		\right]^T
		\left[
		\begin{matrix}
		Q_j & S_j\\
		S_j^T & R_j
		\end{matrix}
		\right]
		\left[
		\begin{matrix}
		x\\
		KCx
		\end{matrix}
		\right] &=
		\mathbf{E} \left\{\mathbf{Tr} \left[
		\begin{matrix}
		I\\
		KC
		\end{matrix}
		\right]^T
		\left[
		\begin{matrix}
		Q_j & S_j\\
		S_j^T & R_j
		\end{matrix}
		\right]
		\left[
		\begin{matrix}
		I\\
		KC
		\end{matrix}
		\right]		
		xx^T\right\}\\
		&=\mathbf{Tr} X
		\left[
		\begin{matrix}
		I\\
		KC
		\end{matrix}
		\right]^T
		\left[
		\begin{matrix}
		Q_j & S_j\\
		S_j^T & R_j
		\end{matrix}
		\right]
		\left[
		\begin{matrix}
		I\\
		KC
		\end{matrix}
		\right] X,
\end{aligned}
\end{equation}
where we used that $\mathbf{E} xx^T = X^2$.
Hence, we obtain a set of convex quadratic inequalities (convex since $R_j\succeq 0$ for all $j$)
$$
\mathbf{Tr} X
		\left[
		\begin{matrix}
		I\\
		KC
		\end{matrix}
		\right]^T
		\left[
		\begin{matrix}
		Q_j & S_j\\
		S_j^T & R_j
		\end{matrix}
		\right]
		\left[
		\begin{matrix}
		I\\
		KC
		\end{matrix}
		\right] X\leq \gamma_j.
$$
There are many existing computational methods to solve convex quadratic optimization problems (see \cite{boyd:vandenberghe:2004}).

Alternatively, we can formulate the optimization problem as a set of linear matrix inequalities as follows. For simplicity, we will assume that $R_j\succ 0$ for all $j$ (The case $R_j\succeq 0$ is analogue with some technical conditions). 

\begin{thm}
The team optimization problem (\ref{linopt}) is equivalent to 
the semi-definite program
\begin{equation}
\label{sd}
	\begin{aligned}
		\min_{\gamma_0, K \in \K}    	&\hspace{2mm} \gamma_0 \\
		\text{subject to } 	&\hspace{2mm} \mathbf{Tr} P_j \leq \gamma_j \\
						&\hspace{2mm} 0 \preceq 
 	\left[
	\begin{matrix}
		P_j -XQ_jX - XS_jKCX - XC^TK^TS_j^TX & XC^TK^T R_j\\
		R_jKCX							& R_j	
	\end{matrix}
	\right]\\ 
	& \hspace{2mm} j=0, ..., M.
	\end{aligned}
\end{equation}
\end{thm}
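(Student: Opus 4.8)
The plan is to prove the equivalence by showing that the feasible set of (\ref{sd}), projected onto the variables $(\gamma_0, K)$, coincides with the feasible set of (\ref{linopt}), and that the objective $\gamma_0$ is literally the same in both problems. Since the auxiliary matrices $P_j$ do not appear in the objective, equivalence of the two optimization problems then follows immediately. The whole argument hinges on a single Schur-complement identity applied to each constraint index $j$.

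First I would rewrite the quadratic form in the constraint of (\ref{linopt}). Expanding the product gives
$$
\left[\begin{matrix} I\\ KC \end{matrix}\right]^T
\left[\begin{matrix} Q_j & S_j\\ S_j^T & R_j \end{matrix}\right]
\left[\begin{matrix} I\\ KC \end{matrix}\right]
= Q_j + S_j KC + C^T K^T S_j^T + C^T K^T R_j KC,
$$
so that, writing $M_j(K)$ for the matrix on the right-hand side, the $j$-th constraint of (\ref{linopt}) reads $\mathbf{Tr}\, X M_j(K) X \leq \gamma_j$. The key step is then the Schur-complement lemma: because $R_j\succ 0$, the linear matrix inequality in (\ref{sd}) is equivalent to
$$
P_j - XQ_jX - XS_jKCX - XC^TK^TS_j^TX - XC^TK^T R_j R_j^{-1} R_j KCX \succeq 0,
$$
which, after the cancellation $R_j R_j^{-1} R_j = R_j$, is exactly $P_j \succeq X M_j(K) X$.

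From here both directions are short. Given a feasible point of (\ref{sd}), monotonicity of the trace under the Loewner order applied to $P_j \succeq X M_j(K) X$ yields $\mathbf{Tr}\, P_j \geq \mathbf{Tr}\, X M_j(K) X$; combining this with the scalar constraint $\mathbf{Tr}\, P_j \leq \gamma_j$ recovers the constraint $\mathbf{Tr}\, X M_j(K) X \leq \gamma_j$ of (\ref{linopt}), with the same $\gamma_0$. Conversely, given a feasible $(\gamma_0, K)$ of (\ref{linopt}), I would simply set $P_j := X M_j(K) X$ for each $j$; then the Schur-complement inequality holds with equality, so the LMI in (\ref{sd}) is satisfied, and $\mathbf{Tr}\, P_j = \mathbf{Tr}\, X M_j(K) X \leq \gamma_j$. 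Thus the two feasible sets coincide on $(\gamma_0,K)$ and share the same objective, proving the claimed equivalence.

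The only place requiring care — and the main obstacle — is the Schur-complement manipulation: one must check that the off-diagonal blocks $XC^TK^T R_j$ and $R_j KCX$ are genuine transposes so that the block matrix is symmetric and the lemma applies, and that the cancellation involving $R_j^{-1}$ is valid. This is precisely where the standing hypothesis $R_j\succ 0$ is used; the remark preceding the theorem that the case $R_j\succeq 0$ is ``analogue with some technical conditions'' reflects that one would otherwise need a generalized Schur complement with the pseudoinverse together with a range (compatibility) condition on the off-diagonal block, which is why the strict-definiteness assumption is adopted here.
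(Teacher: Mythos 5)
Your proof is correct and follows essentially the same route as the paper: expand the quadratic form, introduce the slack matrices $P_j$ with the trace bound $\mathbf{Tr}\,P_j\leq\gamma_j$, and convert the resulting quadratic matrix inequality into the LMI via the Schur complement of $R_j\succ 0$. In fact, your write-up is slightly more complete than the paper's, since you explicitly verify both directions of the slack-variable equivalence (trace monotonicity in one direction, the choice $P_j = XM_j(K)X$ in the other), a step the paper leaves implicit.
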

\begin{proof}
Introduce the matrices $P_j\in \sym^n$, and write the given constraints as
$$
\gamma_j \geq \mathbf{Tr} P_j
$$
\begin{equation}
	\begin{aligned} 
		P_j-
		X
		\left[
		\begin{matrix}
		I\\
		KC
		\end{matrix}
		\right]^T
		\left[
		\begin{matrix}
		Q_j & S_j\\
		S_j^T & R_j
		\end{matrix}
		\right]
		\left[
		\begin{matrix}
		I\\
		KC
		\end{matrix}
		\right] X \succeq 0.		
	\end{aligned}
\end{equation}
Now we have that
\begin{equation}
	\begin{aligned} 
		0 &\preceq 
		X
		\left[
		\begin{matrix}
		I\\
		KC
		\end{matrix}
		\right]^T
		\left[
		\begin{matrix}
		Q_j & S_j\\
		S_j^T & R_j
		\end{matrix}
		\right]
		\left[
		\begin{matrix}
		I\\
		KC
		\end{matrix}
		\right] X\\ 
		&= P_j -XQ_jX - XS_jKCX - XC^TK^TS_j^TX - XC^TK^T R_j KCX.		
	\end{aligned}
\end{equation}
Since $R_j\succ 0$, the quadratic inequality above can be transformed to a linear matrix inequality using the Schur complement (\cite{boyd:vandenberghe:2004}), which is given by

$$
\left[
	\begin{matrix}
		P_j -XQ_jX - XS_jKCX - XC^TK^TS_j^TX & XC^TK^T R_j\\
		R_jKCX							& R_j	
	\end{matrix}
\right]\succeq 0.
$$
Hence, our optimization problem to be solved is given by
\begin{equation}
	\begin{aligned}
		\min_{K \in \K}    	&\hspace{2mm} \gamma_0 \\
		\text{subject to } 	&\hspace{2mm} \mathbf{Tr} P_j \leq \gamma_j \\
						&\hspace{2mm} 0 \preceq 
 	\left[
	\begin{matrix}
		P_j -XQ_jX - XS_jKCX - XC^TK^TS_j^TX & XC^TK^T R_j\\
		R_jKCX							& R_j	
	\end{matrix}
	\right]\\ 
	& \hspace{2mm} j=0, ..., M,
	\end{aligned}
\end{equation}
which proves our theorem.
\end{proof}


\section{Minimax Team Theory}
\label{minimaxteam}

We considered the problem of static stochastic
team decision in the previous sections. This section
treats an analogous version for the deterministic (or worst case)
problem. Although the problem
formulation is very similar, the ideas of the solution are
considerably different, and in a sense more difficult.

The deterministic problem considered is a quadratic game
between a team of players and nature. Each player has limited
information
that could be different from the other players in the team.
This game is formulated as a minimax problem, where the team is
the minimizer and nature is the maximizer.


\subsection{Deterministic Team Problems}
Consider the following team decision problem
\begin{equation}
\label{minimax}
\begin{aligned}
\inf_{\mu} \sup_{x\neq 0}\hspace{1mm} & \frac{J(x,u)}{||x||^2}\\
\text{subject to }\hspace{1mm} & y_i = C_ix\\
                   & u_i = \mu_i(y_i)\\
                   & \text{for } i=1,..., N
\end{aligned}
\end{equation}
where $u_i\in \mathbb{R}^{m_i}$, $m=m_1+\cdots + m_N$,
$C_i\in \mathbb{R}^{p_i\times n}$.\\
 $J(x,u)$ is a quadratic cost given by
$$J(x,u)=
 \left[
\begin{matrix}
x\\
u
\end{matrix}
\right] ^T
\left[
\begin{matrix}
Q_{xx} & Q_{xu}\\
Q_{ux} & Q_{uu}
\end{matrix}
\right]
\left[
\begin{matrix}
x\\
u
\end{matrix}
\right] ,
$$
where
$$
\small
\left[
\begin{matrix}
Q_{xx} & Q_{xu}\\
Q_{ux} & Q_{uu}
\end{matrix}
\right] \in \mathbb{S}^{m+n}.
$$
We will be interested in the case $Q_{uu}\succ 0$.
The players $u_1$,..., $u_N$ make up a \textit{team}, which plays against
\textit{nature} represented by the vector $x$, using
$\mu\in \mathcal{S}$, that is
$$
\mu(Cx)=\left[
  \begin{matrix}
    \mu_1(C_1x)\\
    \vdots\\
    \mu_N(C_Nx)
  \end{matrix}
\right] .
$$

\begin{thm}
If the value of the game (\ref{minimax}) is equal to
$\gamma^*$, then there is a linear decision $\mu(Cx)=KCx$, with
$K=\text{diag}(K_1,...,K_N)$, achieving that value.
\end{thm}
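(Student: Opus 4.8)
The plan is to sandwich the game value $\gamma^*$ between the optimal value over all admissible (possibly nonlinear) decisions and the optimal value over linear decisions, and to show the two coincide by passing through the stochastic team problem of Theorem \ref{radner1}. Write $\gamma_{\mathrm{lin}} = \inf_{K \in \K} \sup_{x \ne 0} \frac{J(x,KCx)}{\|x\|^2}$ and $M(K) = \left[\begin{smallmatrix} I \\ KC \end{smallmatrix}\right]^T \left[\begin{smallmatrix} Q_{xx} & Q_{xu}\\ Q_{ux} & Q_{uu} \end{smallmatrix}\right] \left[\begin{smallmatrix} I \\ KC \end{smallmatrix}\right]$. Since the linear laws $\{x\mapsto KCx : K\in\K\}$ form a subset of $\mathcal{C}$, one has immediately $\gamma^* \le \gamma_{\mathrm{lin}}$. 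For a linear decision the ratio is a Rayleigh quotient, so $\sup_{x\ne0}\frac{J(x,KCx)}{\|x\|^2} = \lambda_{\max}(M(K))$; because $Q_{uu}\succ 0$ this is coercive (radially unbounded) in $K$, so $\inf_{K\in\K}\lambda_{\max}(M(K))$ is attained at some $K^\star$. It therefore suffices to prove the reverse inequality $\gamma^* \ge \gamma_{\mathrm{lin}}$, which then forces $\mu(Cx)=K^\star Cx$ to realize the value.

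For the reverse inequality I would introduce nature's mixed strategies as zero-mean Gaussian laws. For any $\mu\in\mathcal{C}$ and any covariance $\Sigma \succeq 0$ with $\mathbf{Tr}\,\Sigma = 1$, the pointwise bound $J(x,\mu(Cx)) \le \bigl(\sup_{x}\tfrac{J}{\|x\|^2}\bigr)\,\|x\|^2$ integrates, using $\mathbf{E}\|x\|^2=\mathbf{Tr}\,\Sigma=1$, to $\mathbf{E}_{x\sim\mathcal{N}(0,\Sigma)}J(x,\mu(Cx)) \le \sup_{x}\tfrac{J}{\|x\|^2}$. Taking the supremum over $\Sigma$, then the infimum over $\mu$, and invoking weak duality $\inf\sup \ge \sup\inf$ gives
$$
\gamma^* \ \ge\ \sup_{\substack{\Sigma\succeq 0\\ \mathbf{Tr}\,\Sigma=1}} \ \inf_{\mu\in\mathcal{C}} \ \mathbf{E}_{x\sim\mathcal{N}(0,\Sigma)}\,J(x,\mu(Cx)).
$$
For each fixed Gaussian $\Sigma$ the inner problem is a noiseless static team problem ($y_i=C_ix$); completing the square in $Q_{uu}\succ 0$ and applying Theorem \ref{radner1} in the $y=Cx$ form noted in the Remark shows its optimum is attained by a linear law, so $\inf_{\mu}\mathbf{E}_\Sigma J = \inf_{K\in\K}\mathbf{Tr}\bigl(M(K)\Sigma\bigr)$.

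It remains to exchange the order of optimization for this linear problem. The map $(K,\Sigma)\mapsto \mathbf{Tr}(M(K)\Sigma)$ is convex in $K$ (since $Q_{uu}\succeq 0$) and linear, hence concave, in $\Sigma$ over the compact convex set $\{\Sigma\succeq 0,\ \mathbf{Tr}\,\Sigma=1\}$; coercivity in $K$ lets me restrict $K$ to a compact convex set without changing the value, so Proposition \ref{minmaxtheorem} in the Appendix yields $\sup_\Sigma \inf_K \mathbf{Tr}(M(K)\Sigma) = \inf_K \sup_\Sigma \mathbf{Tr}(M(K)\Sigma)$. Finally $\sup_{\Sigma}\mathbf{Tr}(M(K)\Sigma) = \lambda_{\max}(M(K))$, so the right-hand side equals $\gamma_{\mathrm{lin}}$. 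Chaining the inequalities gives $\gamma^* \ge \gamma_{\mathrm{lin}} \ge \gamma^*$, whence equality, and the attained minimizer $K^\star$ produces a linear decision $\mu(Cx)=K^\star Cx$ with worst-case ratio $\lambda_{\max}(M(K^\star))=\gamma^*$.

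I expect the main obstacle to be the legitimacy of the min--max exchange together with the reduction of the inner nonlinear problem to a linear one. Concretely, one must check that Theorem \ref{radner1} genuinely applies in the noiseless case $y_i=C_ix$ (the Gaussian projection argument still gives linearity of the conditional structure), that coercivity from $Q_{uu}\succ 0$ compactifies the $K$-domain so the minimax theorem is available, and that the weak-duality step is ultimately tight --- tightness being precisely what Proposition \ref{minmaxtheorem} buys back. The identity $\sup_\Sigma \mathbf{Tr}(M\Sigma)=\lambda_{\max}(M)$ and the attainment of $\inf_{K\in\K}\lambda_{\max}(M(K))$ are routine, but both are needed to certify that the value is actually realized by a linear law rather than merely approached.
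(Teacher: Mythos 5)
The paper does not actually prove this theorem --- it defers entirely to \cite{gattami:bob:rantzer} --- so your proposal must stand on its own. Its architecture is the right one, and is essentially that of the cited reference: lower-bound $\gamma^*$ by letting nature play zero-mean Gaussian mixed strategies, collapse the inner (nonlinear) team problem to a linear one via Theorem~\ref{radner1} (which indeed applies with $y_i=C_ix$, since $v_i\equiv 0$ is a degenerate Gaussian and the projection/independence argument never uses nondegeneracy), and then exchange $\sup_\Sigma$ and $\inf_K$ in the resulting finite-dimensional convex--concave problem. The weak-duality step, the Rayleigh-quotient identity $\sup_{x\neq 0} J(x,KCx)/\|x\|^2=\lambda_{\max}(M(K))$, and the identity $\sup_\Sigma \mathbf{Tr}(M(K)\Sigma)=\lambda_{\max}(M(K))$ are all sound.

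The genuine gap is the min--max exchange itself. Proposition~\ref{minmaxtheorem} needs both action sets compact, or coercivity in each variable on the whole space; here $\K$ is unbounded, and for singular $\Sigma$ the kernel $F(K,\Sigma)=\mathbf{Tr}(M(K)\Sigma)$ is \emph{not} coercive in $K$. Your patch --- restrict $K$ to a compact set ``without changing the value'' --- only preserves the $\inf_K\sup_\Sigma$ side. On the side you actually need, restricting $K$ to a ball $\mathcal{K}_R$ can only \emph{increase} $\sup_\Sigma\inf_K F$, so Proposition~\ref{minmaxtheorem} applied on $\mathcal{K}_R\times\{\Sigma\succeq 0,\ \mathbf{Tr}\,\Sigma=1\}$ gives $\sup_\Sigma\inf_{K\in\K}F\le\sup_\Sigma\inf_{K\in\mathcal{K}_R}F=\inf_{K\in\mathcal{K}_R}\sup_\Sigma F=\gamma_{\mathrm{lin}}$, which is just weak duality again. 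To run your argument you would need the inner minimizers to lie in one ball uniformly over the simplex of covariances, and that is false: take a single player with $n=2$, $y=x_1$, $J(x,u)=(u-x_2)^2$, and $\Sigma_\epsilon$ the rank-one covariance of $(x_1,x_2)=(\sqrt{\epsilon}\,z,\sqrt{1-\epsilon}\,z)$, $z\sim\mathcal{N}(0,1)$; then $\inf_K F(K,\Sigma_\epsilon)=0$ with unique minimizer $K=\sqrt{(1-\epsilon)/\epsilon}\to\infty$, while $\inf_{|K|\le R}F(K,\Sigma_\epsilon)\to 1$ as $\epsilon\to 0$ for every fixed $R$. The exchange you want is still true, but it requires a tool the paper does not provide: either Sion's minimax theorem, for which compactness of the $\Sigma$-simplex alone suffices for a convex--concave kernel, or a regularization argument --- run the above on $\{\Sigma:\Sigma\succeq (\delta/n) I,\ \mathbf{Tr}\,\Sigma=1\}$, where (assuming each $C_i$ has full row rank) the inner minimizers are uniformly bounded so your compactness trick is legitimate, and then let $\delta\to 0$, using the uniform bound $M(K)\succeq Q_{xx}-Q_{xu}Q_{uu}^{-1}Q_{ux}$ to control the perturbation of the value. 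As written, the proof does not go through at its crucial step.
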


\begin{proof}
For a proof, consult \cite{gattami:bob:rantzer}.
\end{proof}

\subsection{Relation with The Stochastic Minimax Team Decision Problem}
Now consider the stochastic minimax team decision problem
\begin{equation*}
 \min_{K} \max_{\mathbf{E}\|x\|^2=1}\mathbf{E}
\left\{ x^T\left[
\begin{matrix}
I\\
KC
\end{matrix}
\right]^T
\left[
\begin{matrix}
Q_{xx} & Q_{xu}\\
Q_{ux} & Q_{uu}
\end{matrix}
\right]
\left[
\begin{matrix}
I\\
KC
\end{matrix}
\right]x\right\}.
\end{equation*}
Taking the expectation of the cost in the stochastic
problem above yields the equivalent problem
\begin{equation*}
 \min_{K} \max_{\mathbf{Tr} X=1}\mathbf{Tr}\left[
\begin{matrix}
I\\
KC
\end{matrix}
\right]^T
\left[
\begin{matrix}
Q_{xx} & Q_{xu}\\
Q_{ux} & Q_{uu}
\end{matrix}
\right]
\left[
\begin{matrix}
I\\
KC
\end{matrix}
\right]X
\end{equation*}
where $X$ is a positive semi-definite matrix, and is the
covariance matrix of $x$, i. e. $X=\mathbf{E}\hspace{1mm}xx^T$.
Hence, we see that the stochastic minimax team problem is 
equivalent to the deterministic minimax team problem, where
nature maximizes with respect to all covariance matrices
$X$ of the stochastic variable $x$ with variance 
$\mathbf{E}\hspace{1mm}\|x\|^2=\mathbf{E}\hspace{1mm}x^Tx=
\mathbf{Tr}\hspace{1mm} X = 1$.
\section{Deterministic Team Problems with Quadratic Constraints}
Consider the team problem (\ref{minimax}). An equivalent condition for
the existence of a decision function $\mu^\star\in \mathcal{C}$ that achieves the value of the game $\gamma^\star$ is that 
$$
\left[
\begin{matrix}
x\\
\mu^\star(Cx)
\end{matrix}
\right] ^T 
\left[
\begin{matrix}
Q & S\\
S^T & R
\end{matrix}
\right]
 \left[
\begin{matrix}
x\\
\mu^\star(Cx)
\end{matrix}
\right] \leq \gamma^\star \|x\|^2$$
for all $x$, which is equivalent to
$$
\left[
\begin{matrix}
x\\
\mu^\star(Cx)
\end{matrix}
\right] ^T  
\left[
\begin{matrix}
Q-\gamma^\star I & S\\
S^T & R
\end{matrix}
\right]
\left[
\begin{matrix}
x\\
\mu^\star(Cx)
\end{matrix}
\right] \leq 0$$
for all $x$. This is an example of a \textit{power constraint}. We could also have a set of power
constraints that have to be mutually satisfied. For instance, in addition to the minimization
of the worst case quadratic cost, we could have constraints on the induced norms of the
decision functions

$$\frac{\|\mu_i(C_ix)\|^2}{\|x\|^2}\leq \gamma_i \hspace{4mm} \text{for all } x\neq 0,  \hspace{4mm} 
i=1, ..., M,$$ 
or equivalently given by the quadratic inequalities 

$$\|\mu_i(C_ix)\|^2 - \gamma_i \|x\|^2 \leq 0 \hspace{4mm} \text{for all } x,  \hspace{4mm} 
i=1, ..., M.$$ 
Also, the team members could share a common power source, and the power is proportional to the squared norm of the decisions $\mu_i$:
$$\sum _{i=1}^M \|\mu_i(C_ix)\|^2 - c \|x\|^2 \leq 0 \hspace{4mm} \text{for all } x,  $$ 
for some positive real number $c$.

It's not clear whether linear decisions are optimal, since the example give at the introduction indicates
that, in deterministic settings, nonlinear decision are optimal. However, the next result shows how to obtain the linear optimal decisions by solving a semidefinite program. \\

\begin{thm}
\label{gattami2}
Let $C_i\in \mathbb{R}^{p_i\times n}$,
for $i=1, ..., N$. Let
$
\left[
\begin{matrix}
Q_j & S_j\\
S_j^T & R_j
\end{matrix}
\right]\in \sym^{m+n}
$ for $j=0, ..., M$,  
and $R_j\in \sym_{+}^m$  for $0 = 1, ..., M$.
Then, the set of quadratic matrix inequalities 
\begin{equation}
\begin{aligned}
\left[
\begin{matrix}
x\\
KCx
\end{matrix}
\right]^T
\left[
\begin{matrix}
Q_j & S_j\\
S_j^T & R_j
\end{matrix}
\right]
\left[
\begin{matrix}
x\\
KCx
\end{matrix}
\right] \leq 0  ~~~~\forall x, \hspace{3mm} j=0,..., M,
\end{aligned}
\end{equation}
is equivalent to 
\begin{equation}
\begin{aligned}
\left[
\begin{matrix}
Q_j + S_j KC + C^TK^TS_j^T & C^TK^T R_j\\
R_jKC & -R_j
\end{matrix}
\right]
\preceq 0, \hspace{3mm} i=0, ..., M.
\end{aligned}
\end{equation}

\end{thm}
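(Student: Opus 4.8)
The plan is to collapse the two-block quadratic form into a single $n\times n$ matrix and then recognize the target LMI as its Schur complement. First I would substitute $u=KCx$ and multiply out the quadratic form, using that $x^TS_ju=u^TS_j^Tx$ for scalars:
\[
\begin{bmatrix} x\\ KCx \end{bmatrix}^T
\begin{bmatrix} Q_j & S_j\\ S_j^T & R_j \end{bmatrix}
\begin{bmatrix} x\\ KCx \end{bmatrix}
= x^T M_j\, x,
\]
where $M_j := Q_j + S_jKC + C^TK^TS_j^T + C^TK^TR_jKC$. Since this identity holds for every $x\in\mathbb{R}^n$, the requirement that the left-hand side be $\leq 0$ for all $x$ is \emph{exactly} the single matrix inequality $M_j\preceq 0$. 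This reduction does the main conceptual work; what remains is purely linear-algebraic.

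The second step is to read off $M_j\preceq 0$ from the displayed LMI via a Schur complement. Negating the target inequality gives
\[
\begin{bmatrix} -(Q_j + S_jKC + C^TK^TS_j^T) & -C^TK^TR_j\\ -R_jKC & R_j \end{bmatrix} \succeq 0,
\]
and I would take the Schur complement with respect to the bottom-right block $R_j$. The key algebraic cancellation is that the off-diagonal block factors as $-R_jKC = R_j(-KC)$, so that $R_jR_j^\dagger R_j = R_j$ yields $C^TK^TR_jR_j^\dagger R_jKC = C^TK^TR_jKC$; consequently the Schur complement equals precisely $-(Q_j + S_jKC + C^TK^TS_j^T) - C^TK^TR_jKC = -M_j$. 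Thus the negated LMI is positive semidefinite if and only if $R_j\succeq 0$ and $-M_j\succeq 0$, which is the asserted equivalence.

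The one point requiring care---and the step I expect to be the main obstacle---is that the hypothesis is only $R_j\succeq 0$, not $R_j\succ 0$, so the ordinary (invertible-block) Schur complement does not apply and I must use the generalized form with the Moore--Penrose pseudoinverse $R_j^\dagger$. In that form the positive-semidefiniteness of the negated LMI is equivalent to three conditions: $R_j\succeq 0$; the consistency (range) condition $(I - R_jR_j^\dagger)\,R_jKC = 0$; and the Schur-complement inequality $-M_j\succeq 0$. The consistency condition is automatic here, since $(I - R_jR_j^\dagger)R_j = R_j - R_jR_j^\dagger R_j = 0$, so every column of the off-diagonal block $R_jKC$ already lies in the range of $R_j$. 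Hence the equivalence holds verbatim in the semidefinite case with no extra hypotheses, and once these computations are assembled for each $j=0,\dots,M$ the proof is complete.
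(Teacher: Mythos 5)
Your proof is correct and follows essentially the same route as the paper: first collapse the family of scalar inequalities over all $x$ into the single matrix inequality $M_j = Q_j + S_jKC + C^TK^TS_j^T + C^TK^TR_jKC \preceq 0$, then convert that quadratic matrix inequality into the displayed block LMI via a Schur complement on the $R_j$ block. The one place you go beyond the paper is worth noting. The theorem assumes only $R_j \in \sym_+^m$, and the paper's proof simply invokes ``the Schur complement of $R_j$ in $A$,'' which in its ordinary form requires $R_j \succ 0$; for singular $R_j$ the equivalence needs justification. Your use of the generalized Schur complement with the Moore--Penrose pseudoinverse, together with the observation that the range condition $(I - R_jR_j^\dagger)R_jKC = 0$ holds automatically because $R_jR_j^\dagger R_j = R_j$, closes exactly this gap, so under the stated hypothesis $R_j \succeq 0$ your argument is the more complete of the two.
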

\begin{proof}
We have the following chain of inequalities:
$$
\left[
\begin{matrix}
x\\
KCx
\end{matrix}
\right]^T
\left[
\begin{matrix}
Q_j & S_j\\
S_j^T & R_j
\end{matrix}
\right]
\left[
\begin{matrix}
x\\
KCx
\end{matrix}
\right] \leq 0
$$
$$
\Updownarrow
$$
$$
\left[
\begin{matrix}
I\\
KC
\end{matrix}
\right]^T
\left[
\begin{matrix}
Q_j & S_j\\
S_j^T & R_j
\end{matrix}
\right]
\left[
\begin{matrix}
I\\
KC
\end{matrix}
\right] \preceq 0
$$

$$
\Updownarrow
$$

$$
Q_j + S_j KC + C^TK^TS_j^T+ C^TK^TR_jKC \preceq 0
$$

$$
\Updownarrow
$$

$$
A=\left[
\begin{matrix}
Q_j + S_j KC + C^TK^TS_j^T & C^TK^T R_j\\
R_jKC & -R_j
\end{matrix}
\right]
\preceq 0,
$$ 

\noindent where the last equivalence follows from taking the Schur complement of 
$R_j$ in $A$ (see \cite{boyd:vandenberghe:2004}). Hence, our optimization problem becomes
\begin{equation}
\begin{aligned}
\left[
\begin{matrix}
Q_j + S_j KC + C^TK^TS_j^T & C^TK^T R_j\\
R_jKC & -R_j
\end{matrix}
\right]
\preceq 0, \hspace{3mm} i=0, ..., M.
\end{aligned}
\end{equation}
This completes the proof.
\end{proof}

\section{Conclusions}
We have studied multi-objective linear quadratic optimization of team decisions
in both stochastic and deterministic settings. Constrained decision problems tend
to have nonlinear optimal solutions. We have shown that for the Gaussian setting,
linear decisions are in fact optimal, and we can find the linear optimal solutions by
solving a semidefinite program. We then explore the problem of finding the linear
optimal decisions for its deterministic counterpart and show that we can find
the optimal solution by solving a semidefinite program. Future work will consider
optimality of the linear decisions in the deterministic framework. Another problem of interest
is an an $\mathcal{S}$-procedure sort of a result, where we want to find decision function $\mu$ 
such that the inequality $J_0(\mu(x),x)\leq 0$ is satisfied if  $J_1(\mu(x),x)\leq 0$, where $J_0, J_1$ are some
quadratic forms in $\mu$ and $x$. However, this is a much harder problem since the search for linear 
function $\mu(x)$ is not a covnex problem, and it's not clear if it can be convexified.
\section{Acknowledgements}

The author is grateful for Prof. Anders Rantzer and Prof. Bo Bernhardsson for discussions on the topic.\\
This work is supported by the Swedish Research Council.
\bibliography{../..//ref/mybib}

\section*{Appendix}

\subsection*{Game theory}
Let $J=J(u,w)$ be a functional defined on a product vector space 
$\mathbb{U}\times \mathbb{W}$, to be minimized by 
$u\in U\subset \mathbb{U}$ and maximized by $w\in W\subset \mathbb{W}$, where
$U$ and $W$ are the constrained sets. This defines a zero-sum game,
with kernel $J$, in connection with which we can introduce two values,
the \textit{upper value}
$$
\bar{J}:=\inf_{u\in U}\sup_{w\in W} J(u,w),
$$  
and the \textit{lower value}
$$
\underline{J}:=\sup_{w\in W}\inf_{u\in U} J(u,w).
$$  
Obviously, we have the inequality $\bar{J}\geq \underline{J}$.
If  $\bar{J}= \underline{J}=J^\star$, then $J^\star$ is called the 
\textit{value} of the zero-sum game. Furthermore, if there exists a
pair $(u^\star\in U, w^\star\in W)$ such that 
$$
J(u^\star,w^\star)=J^\star,
$$
then the pair $(u^\star,w^\star)$ is called a (pure-strategy) 
\textit{saddle-point solution}. In this case, we say that the game
admits a \textit{saddle-point} (in pure strategies). Such a saddle-point
solution will equivalently satisfy the so-called 
\textit{pair of saddle-point inequalities}:
$$
J(u^\star,w)\leq J(u^\star,w^\star)\leq J(u,w^\star),\hspace{3mm} \forall u\in \mathbb{U},
 \forall w\in \mathbb{W}.
$$

\begin{prp}
\label{minmaxtheorem}
Consider a two-person zero-sum game on convex finite dimensional
action sets $U_1\times U_2$, defined by the continuous kernel 
$J(u_1,u_2)$. Suppose that $J(u_1,u_2)$ is strictly convex in 
$u_1$ and strictly concave in $u_2$. Suppose that either
\begin{itemize}
\item[($i$)] $U_1$ and $U_2$ are closed and bounded, or
\item[($ii$)] $U_i\subseteq \mathbb{R}^{m_i}$, $i=1,2$, and 
$J(u_1,u_2)\rightarrow \infty$ as $\|u_1\|\rightarrow \infty$,
and $J(u_1,u_2)\rightarrow -\infty$ as $\|u_2\|\rightarrow \infty$.
\end{itemize}  
Then, the game admits a unique pure-strategy saddle-point 
equilibrium.
\end{prp}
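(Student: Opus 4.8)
The plan is to treat the two hypotheses by reducing case (ii) to case (i), to obtain existence of a saddle point from the classical convex--concave minimax theorem on compact sets, and to deduce uniqueness from the strictness assumptions. I would begin with the marginal functions $\phi(u_1):=\sup_{u_2\in U_2}J(u_1,u_2)$ and $\psi(u_2):=\inf_{u_1\in U_1}J(u_1,u_2)$. Since $J(\cdot,u_2)$ is convex for each fixed $u_2$, the pointwise supremum $\phi$ is convex, and dually $\psi$ is concave. Under (ii), fixing any reference $u_2^0\in U_2$ gives $\phi(u_1)\ge J(u_1,u_2^0)\to\infty$ as $\|u_1\|\to\infty$, so $\phi$ is coercive, and symmetrically $\psi(u_2)\le J(u_1^0,u_2)\to-\infty$ as $\|u_2\|\to\infty$. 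Consequently the outer minimization of $\phi$ and maximization of $\psi$ may each be confined to a bounded sublevel (resp. superlevel) set; intersecting its closed convex hull with $U_i$ produces compact convex sets $\tilde U_1\subseteq U_1$ and $\tilde U_2\subseteq U_2$ that contain the relevant extremizers and on which both values and the saddle structure are unchanged. This reduces (ii) to (i).

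Under hypothesis (i), continuity of $J$ together with compactness of $U_1,U_2$ guarantees that the supremum and infimum defining $\phi$ and $\psi$ are attained and that $\phi,\psi$ are continuous; convexity of $\phi$ on the compact convex set $U_1$ then yields a minimizer $u_1^\star$, and concavity of $\psi$ on $U_2$ a maximizer $u_2^\star$. The decisive step is the minimax equality $\bar{J}=\underline{J}$: this is precisely the content of the classical convex--concave minimax theorem (von Neumann--Sion), whose hypotheses --- $U_1,U_2$ convex and compact, $J$ continuous, convex in $u_1$ and concave in $u_2$ --- are all in force. Writing $J^\star$ for the common value, I would verify the saddle inequalities directly: since $\phi(u_1^\star)=\bar{J}=J^\star=\underline{J}=\psi(u_2^\star)$, the chain $J(u_1^\star,u_2^\star)\le\phi(u_1^\star)=J^\star=\psi(u_2^\star)\le J(u_1^\star,u_2^\star)$ forces equality throughout, whence $J(u_1^\star,u_2)\le J(u_1^\star,u_2^\star)\le J(u_1,u_2^\star)$ for all admissible $u_1,u_2$, so $(u_1^\star,u_2^\star)$ is a saddle point.

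For uniqueness I would invoke the interchange (``rectangle'') property of saddle points: if $(u_1^\star,u_2^\star)$ and $(\tilde u_1,\tilde u_2)$ are both saddle points, then so are the mixed pairs $(\tilde u_1,u_2^\star)$ and $(u_1^\star,\tilde u_2)$, and all four share the value $J^\star$. In particular both $u_1^\star$ and $\tilde u_1$ minimize $J(\cdot,u_2^\star)$ over $U_1$; but $J(\cdot,u_2^\star)$ is \emph{strictly} convex, so its minimizer is unique and $u_1^\star=\tilde u_1$. Symmetrically, strict concavity of $J(u_1^\star,\cdot)$ forces $u_2^\star=\tilde u_2$, and the saddle point is unique.

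I expect the minimax equality $\bar{J}=\underline{J}$ to be the main obstacle: the coercivity reduction, the attainment of extrema by compactness, and the strict-convexity uniqueness argument are all routine, whereas interchanging $\inf$ and $\sup$ is the genuinely nontrivial ingredient. The cleanest route is to cite the standard von Neumann--Sion theorem, whose convex--compact hypotheses I have arranged to hold; a self-contained alternative would derive it from a separating-hyperplane or KKM argument, but that is longer and unnecessary here.
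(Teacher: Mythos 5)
Your proposal is correct, but note that the paper itself contains no argument to compare against: its ``proof'' of Proposition \ref{minmaxtheorem} is a one-line citation to Ba\c{s}ar and Olsder, p.~177. What you have written is essentially a sound reconstruction of that classical textbook proof: Sion's convex--concave minimax theorem on compact convex sets for existence under (i), a coercive truncation to reduce (ii) to (i), the marginal-function chain $J(u_1^\star,u_2^\star)\le\phi(u_1^\star)=J^\star=\psi(u_2^\star)\le J(u_1^\star,u_2^\star)$ to verify the saddle inequalities, and ordered interchangeability of saddle points combined with strict convexity/concavity for uniqueness. The only step that deserves more care than your sketch gives it is the claim that the truncation leaves ``the saddle structure unchanged'': a saddle point of the restricted game on $\tilde U_1\times\tilde U_2$ is not automatically a saddle point of the full game, since the hypothesis only gives coercivity in $u_2$ pointwise in $u_1$, and you need $\tilde U_2$ to absorb the maximizers of $J(u_1,\cdot)$ \emph{uniformly} over $u_1\in\tilde U_1$. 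This is repairable by a standard observation that your own convexity hypothesis supplies: covering the compact $\tilde U_1$ by a simplex with vertices $v_0,\dots,v_{m_1}$, convexity of $J(\cdot,u_2)$ gives $\sup_{u_1\in\tilde U_1}J(u_1,u_2)\le\max_i J(v_i,u_2)\to-\infty$ as $\|u_2\|\to\infty$, so the coercivity is automatically locally uniform and the one-shot truncation works. Alternatively, your marginal-function verification already closes the gap provided you insist that $\phi$ and $\psi$ are defined as the supremum and infimum over the \emph{full} sets $U_2$ and $U_1$ throughout. With that caveat made explicit, your proof is complete and supplies exactly the argument the paper delegates to its reference; it also makes visible why the remark following the proposition holds, since strictness enters only in the final uniqueness step.
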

\begin{proof}
 See \cite{basar:olsder:1999}, pp. 177.
\end{proof}

\textbf{Remark}. The assumption of strict convexity and concavity in Proposition \ref{minmaxtheorem}
can be relaxed to only convexity and concavity, and a saddle-point exists in pure strategies, but it is not necessarily unique.

\end{document}